\newtheorem{theorem}{Theorem}[section]
\newtheorem{lemma}{Lemma}[section]
\begin{document}
%------------------------------------------------------------------------------

\title{A note on using the mass matrix as a preconditioner for the Poisson equation}

\author{Chen Greif\thanks{Department of Computer Science, The University of British Columbia, Vancouver, BC, V6T 1Z4, Canada. The work of first author  was supported in part by a Discovery Grant of the Natural Sciences and Engineering Research Council of Canada. \tt{greif@cs.ubc.ca},  \tt{yunhui.he@ubc.ca}. } 
\and Yunhui He\footnotemark[1]}

\maketitle
 
\begin{abstract}
 
We show that the mass matrix derived from finite elements can be effectively used as a preconditioner for iteratively solving the linear system arising from finite-difference discretization of the Poisson equation, using the conjugate gradient method. We derive  analytically the condition number of the preconditioned operator. Theoretical analysis shows that the ratio of the condition number of the  Laplacian to  the preconditioned operator  is $8/3$ in one dimension, $9/2$ in two dimensions,  and $2^9/3^4 \approx 6.3$ in three dimensions.  From this it follows that the expected iteration count for achieving a fixed reduction of the norm of the residual is smaller than a half of the number of the iterations of unpreconditioned CG in 2D and 3D. The scheme is easy to implement, and numerical experiments show its efficiency.
\end{abstract}

\vskip0.3cm {\bf Keywords.}
mass matrix, Poisson equation,  conjugate gradient, preconditioning, finite elements, finite differences
%
%
%===================================================================================================================================
\section{Introduction}
\label{sec:intro}

Consider a standard  finite difference discretization of the Poisson equation in one, two  and three dimensions:
\begin{equation} 
-\Delta u = f 
\label{eq:poisson}
\end{equation}
on a simple domain $\Omega$, e.g., the unit interval, square or cube respectively, and subject to simple boundary conditions such as Dirichlet.
Suppose we discretize the problem on a uniform mesh whose size is $h =  \frac{1}{n+1}$, where $n$ is the number of meshpoints in a single direction of the domain.

The computational stencil for the Laplacian is given by 
\begin{equation}\label{eq:Laplace-stencil-1d}
  A_1= \frac{1}{h^2} \begin{bmatrix} -1 & 2 & -1 \end{bmatrix},
\end{equation}
\begin{equation}\label{eq:Laplace-stencil-2d}
A_2=\frac{1}{h^2}
  \begin{bmatrix}
               &           -1            &               \\
   -1 &        4         & -1 \\
               &   -1                     &
  \end{bmatrix},
\end{equation}
and
\begin{equation}\label{eq:Laplace-stencil-3d}
A_3=\frac{1}{h^2}
\begin{bmatrix}
  \begin{bmatrix}
 - 1
  \end{bmatrix}
  \quad 
  \begin{bmatrix}
               &           -1            &               \\
   -1 &        6        & -1 \\
               &   -1                     &
  \end{bmatrix}\quad 
  \begin{bmatrix}
  -1
  \end{bmatrix}
  \end{bmatrix}.
\end{equation}
For large problems in two and three dimensions we are interested in iterative methods, and specifically, in solving  the resulting linear system  by the Conjugate Gradient (CG) method. 

In our recent work \cite{CH2021addVanka}, we showed that in one dimension and in the context of multigrid, an element-wise Vanka smoother is equivalent to the scaled mass operator obtained from the linear finite element method, and in two dimensions, the  
element-wise Vanka smoother is equivalent to the scaled mass operator discretized by bilinear finite element method plus a scaled identity operator. While the context of that work is different, this has motivated us to ask whether the mass matrix obtained from the finite element method can be utilized as a preconditioner for the Laplacian. Here, we mean that preconditioning would amount to multiplying by the mass matrix; no inversion is involved. Such a possibility seems attractive given the ease of multiplying the Laplacian by the mass matrix, which is sparse and well conditioned. In this short note we provide analytical and numerical evidence  that  using the mass matrix in this manner at least doubles the convergence speed of CG in 2D and 3D at a modest computational cost.

In Section \ref{sec:convergence} we provide analytical observations on the condition number and the spectral distribution of the Laplacian scaled by the mass matrix, in comparison with the Laplacian. In Section \ref{sec:numerical} we validate our analysis experimentally. Brief concluding remarks are given in Section \ref{sec:conc}.

%============================================================================================================
\section{Convergence analysis}
\label{sec:convergence} 

The stencil of the mass matrix  in 1D using linear finite elements is given by 
\begin{equation*}
M= \frac{h}{6}  \begin{bmatrix}
 1 & 4 & 1
\end{bmatrix}.
\end{equation*}
In the context of solving the Poission equation~\eqref{eq:poisson}, we consider the scaled mass matrix as a preconditioner for the Laplacian operator defined in \eqref{eq:Laplace-stencil-1d}, \eqref{eq:Laplace-stencil-2d} and \eqref{eq:Laplace-stencil-3d},  given by 
\begin{equation*}
M_1= h M,  \quad  M_2=  M \otimes M, \quad M_3=  h^{-1} M \otimes M\otimes M.
\end{equation*}

A well-known convergence bound on CG is determined by the condition number of the  coefficient matrix, and we will study the condition numbers of $A_d$ and  preconditioned operator $M_d A_d$. We rush to add that formally one would need to consider a symmetric positive definite similarity transformation of the latter, $M_d^{1/2} A_d M_d^{1/2} $, but the spectrum and the condition number are not affected by that transformation.

The following results are straightforward and/or well known, and are provided without a proof.
%================================================================================
\begin{lemma}\cite{MR1807961}
The eigenvalues of $A_d, d=1,2,3,$ are given by
\begin{equation*}
\lambda(A_d) =  \frac{2}{h^2}   \sum_{j=1}^d \left (1-\cos(\pi h k_j) \right), \quad k_j \in \{ 1, 2,\ldots, n\}.
\end{equation*}
\end{lemma}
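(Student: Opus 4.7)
The plan is to reduce everything to the one-dimensional case and then exploit the Kronecker (tensor) structure of the higher-dimensional stencils.

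First I would handle $d=1$ by exhibiting the eigenpairs explicitly. The matrix $A_1$ is the standard tridiagonal discrete Laplacian on $n$ interior meshpoints with homogeneous Dirichlet boundary conditions. The natural candidate eigenvectors are the discrete sine modes $v^{(k)}$ with components $v^{(k)}_j = \sin(\pi h k j)$ for $j=1,\ldots,n$ and $k\in\{1,\ldots,n\}$. A direct application of the trigonometric identity $\sin(\alpha-\beta)+\sin(\alpha+\beta)=2\sin\alpha\cos\beta$ to the row action $(A_1 v^{(k)})_j=\tfrac{1}{h^2}(-v^{(k)}_{j-1}+2v^{(k)}_j-v^{(k)}_{j+1})$ yields $(A_1 v^{(k)})_j = \tfrac{2}{h^2}(1-\cos(\pi h k))\,v^{(k)}_j$, and the boundary entries vanish because $\sin(\pi h k\cdot 0)=\sin(\pi h k(n+1))=\sin(\pi k)=0$. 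This gives $n$ distinct eigenvalues and, since $A_1$ is symmetric, the claim in 1D.

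Next I would handle $d=2,3$ by writing the stencils as Kronecker sums. Specifically,
\begin{equation*}
A_2 = A_1\otimes I_n + I_n\otimes A_1, \qquad
A_3 = A_1\otimes I_n\otimes I_n + I_n\otimes A_1\otimes I_n + I_n\otimes I_n\otimes A_1,
\end{equation*}
which one verifies by reading off the five-point and seven-point stencils. A standard fact about Kronecker sums is that if $A_1$ has eigenpairs $(\mu_{k},v^{(k)})$, then $A_d$ has eigenpairs with eigenvalues $\sum_{j=1}^{d}\mu_{k_j}$ and eigenvectors $v^{(k_1)}\otimes\cdots\otimes v^{(k_d)}$, where each index $k_j$ ranges over $\{1,\ldots,n\}$. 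Substituting the 1D eigenvalues $\mu_k=\tfrac{2}{h^2}(1-\cos(\pi h k))$ gives exactly the formula in the lemma.

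I do not anticipate a genuine obstacle: the only nontrivial step is the trigonometric manipulation in 1D, and everything else is bookkeeping with Kronecker products. The authors in fact say the result is well known and omit the proof, so a short paragraph citing the sine-mode diagonalization of $A_1$ and the Kronecker-sum identity above is sufficient.
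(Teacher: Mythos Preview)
Your proposal is correct and is exactly the standard argument one would give; the paper itself provides no proof at all (it cites the result as well known and explicitly says it is ``provided without a proof''), so there is nothing to compare against beyond noting that your sine-mode diagonalization in 1D together with the Kronecker-sum identity is the classical route.
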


%================================================================================
\begin{lemma}
The eigenvalues of $M_d, d=1,2,3,$ are given by
\begin{equation*}
\lambda(M_d) = \frac{h^2}{3^d}  \prod_{j=1}^d \left(2+\cos(\pi h k_j) \right), \quad k_j \in \{ 1, 2,\ldots, n\}.
\end{equation*}
\end{lemma}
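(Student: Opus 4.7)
The plan is to compute the eigenvalues of the $1$D mass matrix $M$ directly and then lift the result to $d=2,3$ via the spectral properties of Kronecker products. Note that $M$ is a symmetric Toeplitz tridiagonal matrix of the form $\frac{h}{6}\mathrm{tridiag}(1,4,1)$. Such matrices admit a well-known closed-form eigendecomposition: the eigenvectors are the discrete sine vectors $v_k = (\sin(\pi h k j))_{j=1}^n$ (the same vectors that diagonalize the $1$D Laplacian $A_1$ used in the preceding lemma), and for any symmetric tridiagonal Toeplitz matrix $\mathrm{tridiag}(a,b,a)$ of size $n\times n$ with $h=1/(n+1)$, the eigenvalues are $b+2a\cos(\pi h k)$ for $k=1,\dots,n$.

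First I would substitute $a=1$, $b=4$ into this formula and multiply by the scaling $h/6$, obtaining
\[
\lambda(M) = \frac{h}{6}\bigl(4+2\cos(\pi h k)\bigr) = \frac{h}{3}\bigl(2+\cos(\pi h k)\bigr), \qquad k=1,\dots,n.
\]
Since $M_1 = hM$, this immediately gives $\lambda(M_1) = \frac{h^2}{3}(2+\cos(\pi h k_1))$, which matches the claimed formula with $d=1$.

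Next I would handle $d=2,3$ by invoking the standard fact that if $A$ and $B$ have eigenvalues $\{\alpha_i\}$ and $\{\beta_j\}$, then $A\otimes B$ has eigenvalues $\{\alpha_i \beta_j\}$ (and the eigenvectors are the Kronecker products of the respective eigenvectors). Applying this twice to $M_2 = M\otimes M$ yields
\[
\lambda(M_2) = \frac{h^2}{9}\bigl(2+\cos(\pi h k_1)\bigr)\bigl(2+\cos(\pi h k_2)\bigr),
\]
and similarly for $M_3 = h^{-1} M\otimes M\otimes M$ one gets an extra factor $h^{-1}\cdot (h/3)^3 = h^2/27$. Both match the stated formula with $3^d$ in the denominator.

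There is no real obstacle here; the only thing requiring care is bookkeeping of the prefactors $h/6$, $h$, and $h^{-1}$ so that the final powers of $h$ and $3$ agree with the claim. The proof relies only on (i) the explicit eigenpairs of symmetric tridiagonal Toeplitz matrices with equal off-diagonals and (ii) the multiplicative behavior of the spectrum under Kronecker products, both of which are classical.
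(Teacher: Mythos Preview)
Your argument is correct. The paper itself does not prove this lemma at all; it states the result as ``straightforward and/or well known'' and omits the proof. Your derivation---explicit diagonalization of the symmetric tridiagonal Toeplitz matrix $\tfrac{h}{6}\,\mathrm{tridiag}(1,4,1)$ by the discrete sine vectors, followed by the multiplicativity of spectra under Kronecker products, with careful tracking of the scalar prefactors $h$, $h^{-1}$, and $h/6$---is exactly the standard computation the authors have in mind, and every step checks out.
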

%===============================================================================
\begin{lemma}\label{lemma:cond-Laplace}
The condition number of $A_d, d=1,2,3$, is given by
\begin{equation*}
\kappa = \frac{\lambda_{\rm max}(A_d) }{\lambda_{\rm min}(A_d) } = \frac{1-\cos(\pi h n)}{1-\cos(\pi h)}.
\end{equation*} 
\end{lemma}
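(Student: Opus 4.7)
The plan is to read off the maximum and minimum eigenvalues directly from the closed-form expression for $\lambda(A_d)$ given in the first lemma of this section, and then take their ratio.

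First I would note that since $h = 1/(n+1)$, the argument $\pi h k_j$ of the cosine lies in the interval $[\pi h,\pi h n]\subset(0,\pi)$ for every admissible index $k_j\in\{1,2,\ldots,n\}$. On this interval the scalar function $\theta\mapsto 1-\cos\theta$ is strictly increasing, so each summand $1-\cos(\pi h k_j)$ attains its minimum precisely at $k_j=1$ and its maximum precisely at $k_j=n$, independently of the other indices.

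Next I would use this to extremize the $d$-fold sum. Because the summands are non-negative and each is extremized at the same endpoint of the index range, the sum
\begin{equation*}
\lambda(A_d)=\frac{2}{h^{2}}\sum_{j=1}^{d}\bigl(1-\cos(\pi h k_j)\bigr)
\end{equation*}
is maximized by choosing $k_1=\cdots=k_d=n$ and minimized by choosing $k_1=\cdots=k_d=1$. This gives
\begin{equation*}
\lambda_{\max}(A_d)=\frac{2d}{h^{2}}\bigl(1-\cos(\pi h n)\bigr),\qquad \lambda_{\min}(A_d)=\frac{2d}{h^{2}}\bigl(1-\cos(\pi h)\bigr).
\end{equation*}

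Finally, I would take the quotient: the common prefactor $2d/h^{2}$ cancels, leaving exactly $(1-\cos(\pi h n))/(1-\cos(\pi h))$, which is the claimed expression. There is no real obstacle here; the only point that deserves a sentence of justification is the monotonicity argument, which relies on the fact that $\pi h n<\pi$ so we never cross the peak of $1-\cos\theta$ on $[0,\pi]$. A pleasant byproduct of the cancellation is that the condition number of $A_d$ is in fact independent of the dimension $d$, which is worth noting in passing since it will be contrasted later with the dimension dependence of $\kappa(M_d A_d)$.
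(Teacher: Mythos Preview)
Your argument is correct: the monotonicity of $1-\cos\theta$ on $(0,\pi)$, together with the separability of the sum, pins down the extremal eigenvalues exactly as you claim, and the factor $2d/h^2$ cancels in the ratio. The paper itself does not supply a proof of this lemma (it is listed among results that are ``straightforward and/or well known, and are provided without a proof''), so your write-up simply fills in what the authors left implicit.
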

From Lemma \ref{lemma:cond-Laplace}, we see that when $h\rightarrow 0$, the condition number $\kappa$ goes to infinity, which means that the iteration number will increase dramatically for CG without preconditioner.
%================================================================================

We now consider the preconditioned operator $T_d = M_dA_d$ and analyze its eigenvalues.
\begin{theorem}
The eigenvalues of $T_d = M_dA_d$  are given by
\begin{equation}\label{eq:T_d-eigs}
\lambda(T_d) = \frac{2}{3^d} \prod_{j=1}^d \left(2+\cos(\pi h k_j) \right)  \left(\sum_{j=1}^d \left (1-\cos(\pi h k_j) \right) \right).
\end{equation}
Furthermore, the condition number of $T_d$ is  as follows:
\\
In 1D,
\begin{equation}\label{eq:kap-1d}
\kappa_p =   \frac{\left(2+\cos(\pi h [2/(3h)]) \right) \left(1-\cos(\pi h [2/(3h)]) \right) }{ (2+\cos(\pi h) ) (1-\cos(\pi h))},
\end{equation}
where $[\cdot]$ stands for  the integer part of a number.

\noindent In 2D
\begin{equation}\label{eq:kap-2d}
\kappa_p =   \frac{\left(2+\cos(\pi h [1/(2h)]) \right)^2  \left(1-\cos(\pi h [1/(2h)]) \right) }{\left(2+\cos(\pi h) \right)^2  \left(1-\cos(\pi h) \right) }.
\end{equation}
In 3D
\begin{equation}\label{eq:kap-3d}
\kappa_p =   \frac{\left(2+\cos(\pi h  \beta ) \right)^3  \left(1-\cos(\pi h \beta) \right) }{\left(2+\cos(\pi h) \right)^3  \left(1-\cos(\pi h) \right) },
\end{equation}
where $\beta=[{\rm arc cos}(1/4)/(\pi h)]$.
\end{theorem}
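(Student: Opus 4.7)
The plan is to derive the eigenvalue formula~\eqref{eq:T_d-eigs} from the two preceding lemmas and then to extract $\lambda_{\max}(T_d)$ and $\lambda_{\min}(T_d)$ by extremizing that formula over $k_j\in\{1,\ldots,n\}$. For the eigenvalues, I observe that both $A_d$ and $M_d$ are built by Kronecker sums/products from the same one-dimensional operators $A_1$ and $hM$, and these 1D operators share the orthonormal eigenbasis of discrete sine vectors $v^{(k)}_i=\sin(\pi h k i)$. Consequently the higher-dimensional $A_d$ and $M_d$ share the tensor-product eigenbasis, commute, and $T_d=M_dA_d$ has eigenvalues equal to the pointwise products of the eigenvalues of $M_d$ and $A_d$ at the same mode $(k_1,\ldots,k_d)$. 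Multiplying the two expressions from the preceding lemmas (the factors of $h^2$ cancel) delivers \eqref{eq:T_d-eigs} at once.

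For the condition number, I would write $c_j=\cos(\pi h k_j)$ and
\[
F_d(c_1,\ldots,c_d)=\prod_{j=1}^d(2+c_j)\cdot\sum_{j=1}^d(1-c_j),
\]
so that $\lambda(T_d)=(2/3^d)F_d$. The partial derivatives are
\[
\partial_{c_j}F_d=\prod_{i\ne j}(2+c_i)\,\Big[\sum_i(1-c_i)-(2+c_j)\Big],
\]
and setting them all to zero in the interior of $[-1,1]^d$ forces $c_1=\cdots=c_d=c$ with $d(1-c)=2+c$, i.e.\ $c=(d-2)/(d+1)$. This yields $c=-1/2,\ 0,\ 1/4$ for $d=1,2,3$, corresponding to the continuous maximizing modes $k=2/(3h),\ 1/(2h),\ \arccos(1/4)/(\pi h)$; rounding to the nearest integer produces the indices appearing in \eqref{eq:kap-1d}--\eqref{eq:kap-3d} and delivers $\lambda_{\max}(T_d)$. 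On the other hand $F_d\ge 0$ on $[-1,1]^d$ and vanishes only at the corner $(1,\ldots,1)$, so the grid minimum is attained at the closest admissible mode $k_1=\cdots=k_d=1$, giving $\lambda_{\min}(T_d)=(2d/3^d)(2+\cos\pi h)^d(1-\cos\pi h)$. Forming $\lambda_{\max}/\lambda_{\min}$ cancels the common constant $(2d)/3^d$ and produces the three formulas.

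The hard part is certifying that the unique symmetric interior stationary point is actually the global maximizer of $F_d$ on $[-1,1]^d$, and not a saddle or merely a local extremum. This has to be done by comparing against the restrictions of $F_d$ to the boundary faces $c_j=\pm 1$, each of which is itself an optimization of the same structural type in dimension $d-1$; in 3D this yields several sub-cases to dispatch by hand (the face $c_j=1$ reduces to $3F_{d-1}$, whereas $c_j=-1$ produces a genuinely new lower-dimensional problem that must be maximized separately). A secondary technical point is that replacing the continuous argmax $c=(d-2)/(d+1)$ by the nearest grid value $\cos(\pi h k_j)$ introduces only an $O(h)$ correction, so the asymptotic ratios $8/3$, $9/2$, and $2^9/3^4$ advertised in the abstract are recovered in the $h\to 0$ limit.
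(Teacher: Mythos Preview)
Your proposal is correct and follows essentially the same route as the paper: shared sine eigenvectors give \eqref{eq:T_d-eigs} as a product of the two lemmas, and the condition number is obtained by extremizing $F_d(c_1,\ldots,c_d)=\prod_j(2+c_j)\sum_j(1-c_j)$ over $[-1,1]^d$, locating the interior critical point and then checking the boundary faces. The only cosmetic difference is that you package the stationary point as $c=(d-2)/(d+1)$ uniformly in $d$, whereas the paper treats $d=1,2,3$ one at a time and carries out the boundary comparison explicitly (face $c_j=1$ reducing to $3F_{d-1}$, face $c_j=-1$ giving the auxiliary function $(2+x)(2+y)(4-x-y)$); also note the paper uses the integer part $[\cdot]$ rather than rounding to the nearest integer.
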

%===============================================================================
\begin{proof}
We can consider local Fourier analysis \cite{MR1807961} here to compute the eigenvalues of $M_d A_d$. When $M_d$ and $A_d$ are obtained from periodic operator, then the eigenvalues of 
$M_d A_d$ are the products of eigenvalues of $M_d$ and $A_d$.

When $d=1$, from \eqref{eq:T_d-eigs}, we have
\begin{equation*}
\lambda(T_1) =\frac{2}{3}  \left(2+\cos(\pi h k_j) \right)   \left (1-\cos(\pi h k_j) \right).
\end{equation*} 
Let us consider $f_1(x) =(2+x)(1-x)$ with $x \in[-1,1]$. Note that the maximum of $f_1(x)$  is achieved at $x=-\frac{1}{2}$ and the minimum is achieved at $x=1$. 
Thus, 
\begin{align*}
\lambda_{\max}(T_1)&=\frac{2}{3} \left(2+\cos(\pi h [2/(3h)]) \right) \left(1-\cos(\pi h [2/(3h)]) \right),\\
\lambda_{\min}(T_1) &=\frac{2}{3}(2+\cos(\pi h) ) (1-\cos(\pi h)),
\end{align*} 
which leads to \eqref{eq:kap-1d}.

When $d=2$, from \eqref{eq:T_d-eigs}, we have
\begin{equation*}
\lambda(T_2) =\frac{2}{9}  \left(2+\cos(\pi h k_1) \right)\left(2+\cos(\pi h k_2) \right)    \left (2-\cos(\pi h k_1) -\cos(\pi h k_2)\right).
\end{equation*} 
Let us consider $f_2(x,y) =(2+x)(2+y)(2-x-y)$ with $x,y \in[-1,1]$.  We compute the derivatives of $f_2$ with respect to $x$ and $y$, given by
\begin{align*}
f_{2,x}&= -4x-2y-2xy-y^2,\\
f_{2,y} &= -4y-2x-2xy-x^2.
\end{align*} 
Solving $f_{2,x}=f_{2,y}=0$ with $x,y\in[-1,1]$ gives $x=y=0$. It readily follows that $(0,0)$ is a local maximum point and $f_2(0,0)=8$. 
Next, we consider the boundary of $\Omega=[-1,1]^2$, and we find the extreme maximum are $f_2(1/2,-1)=f_2(-1,1/2)=25/4<f_2(0,0)$ and the minimum is  $f(1,1)=0$.
Thus, 
\begin{align*}
\lambda_{\max}(T_2)&=\frac{4}{9} \left(2+\cos(\pi h [1/(2h)]) \right)^2  \left(1-\cos(\pi h [1/(2h)]) \right),\\
\lambda_{\min}(T_2) &=\frac{4}{9}\left(2+\cos(\pi h) \right)^2  \left(1-\cos(\pi h) \right),
\end{align*} 
which leads to \eqref{eq:kap-2d}.
 
When $d=3$, from \eqref{eq:T_d-eigs}, we have
\begin{equation*}
\lambda(T_3) =\frac{2}{27}  \left(2+\cos(\pi h k_1) \right)\left(2+\cos(\pi h k_2) \right)  \left(2+\cos(\pi h k_3) \right)   \left (3-\cos(\pi h k_1) -\cos(\pi h k_2) -\cos(\pi hk_3)\right).
\end{equation*} 
Let us consider $f_3(x,y,z) =(2+x)(2+y)(2+z)(3-x-y-z)$ with $x,y,z \in[-1,1]$.  We compute the derivatives of $f_3$ with respect  to $x, y$ and $z$, given by
\begin{align*}
f_{3,x}&= (2+y)(2+z)(1-2x-y-z),\\
f_{3,y} &=(2+x)(2+z)(1-2y-x-z),\\
f_{3,z} &= (2+x)(2+y)(1-2z-x-y).
\end{align*} 
Solving $f_{3,x}=f_{3,y}=f_{3,z}=0$ with $x,y,z\in[-1,1]$ gives $x=y=z=1/4$. It is obvious that $(1/4,1/4,1/4)$ is a local maximum point and $f_3(1/4,1/4,1/4)=(9/4)^4$. 

Next, we consider $f_3(x,y,z)$ at the boundary of $\Omega=[-1,1]^3$, and  due to the symmetry  of $f_3$, we only need to consider two cases. 
One is that $z=1$ and $(x,y)\in[-1,1]^2$  and the other is $z=-1$ and $(x,y)\in[-1,1]^2$ . When $z=1$ and  $(x,y)\in[-1,1]^2$, $f_3(x,y,1)=g_1(x,y)=3(2+x)(2+y)(2-x-y)$ $(x,y)\in [-1,1]^2$. However,  from the proof of the case $d=2$, we know that the maximum of $g_1(x,y)$ is $g_1(0,0)=24$ and the minimum of $g_1(x,y)$ is $g_1(1,1)=0$.  When $z=-1$ and  $(x,y)\in[-1,1]^2$, $f_3(x,y,-1)=g_2(x,y)=(2+x)(2+y)(4-x-y)$. It can easily be shown that  the maximum of $g_2(x,y)$ is $g_2(1/2,1)=75/4$ and the minimum of $g_2(x,y)$ is $g_2(0,-1)=10$.  
Thus, the maximum of $f_3$ is $f_3(1/4,1/4,1/4)=(9/4)^4$ and the minimum is $f_3(1,1,1)=0$. This means that
\begin{align*}
\lambda_{\max}(T_3)&=\frac{2}{9} \left(2+\cos(\pi h \beta) \right)^3  \left(1-\cos(\pi h \beta) \right), \quad  \beta=[{\rm arc cos}(1/4)/(\pi h)],\\
\lambda_{\min}(T_3) &=\frac{2}{9}\left(2+\cos(\pi h) \right)^3  \left(1-\cos(\pi h) \right),
\end{align*} 
which yields \eqref{eq:kap-3d}.
\end{proof}
%================================================================================
Next, we describe the relationship between the two condition numbers, $\kappa$ and $\kappa_p$.

\begin{theorem}\label{thm:ratio-results}
Define the ratio $r =\frac{\kappa}{\kappa_p} $. The the ratio satisfies: \\
In 1D
\begin{equation*}
r_1 = \frac{(1-\cos(\pi h n)  (2+\cos(\pi h) )}{ \left(2+\cos(\pi h [2/(3h)]) \right) \left(1-\cos(\pi h [2/(3h)]) \right) },
\end{equation*}
and
\begin{equation*}
\lim_{h\rightarrow 0} r_1 = \frac{8}{3}\approx 2.7.
\end{equation*} 
In 2D
\begin{equation*}
r_2 =  \frac{(1-\cos(\pi h n)  \left(2+\cos(\pi h) \right)^2} {\left(2+\cos(\pi h [1/(2h)]) \right)^2  \left(1-\cos(\pi h [1/(2h)]) \right) },
\end{equation*}
and 
\begin{equation*}
\lim_{h\rightarrow 0} r_2 = \frac{9}{2}=4.5.
\end{equation*} 
In 3D
\begin{equation*}
r_3 =  \frac{(1-\cos(\pi h n)  \left(2+\cos(\pi h) \right)^3} {\left(2+\cos(\pi h \beta ) \right)^3  \left(1-\cos(\pi h \beta) \right) },
\end{equation*}
where $\beta=[{\rm arc cos}(1/4)/(\pi h)]$ and 
\begin{equation*}
\lim_{h\rightarrow 0} r_3 = \frac{2^9}{3^4} \approx 6.3.
\end{equation*} 
\end{theorem}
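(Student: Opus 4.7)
The plan is to obtain each $r_d$ by direct division of the two condition number formulas (from Lemma~\ref{lemma:cond-Laplace} and the preceding theorem) and then evaluate the limit $h\to 0$ using a couple of elementary observations. The closed-form expressions for $r_1,r_2,r_3$ follow purely algebraically from the definition $r_d = \kappa/\kappa_p$: the prefactors $2/3^d$ in $\lambda_{\max}(T_d)$ and $\lambda_{\min}(T_d)$ cancel in the ratio, as does the common $1/h^2$ in $\lambda(A_d)$, so only the trigonometric factors survive. There is nothing to compute beyond substitution.

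For the limits, I would rely on two elementary facts. First, since $hn = n/(n+1) \to 1$, we have $\pi h n \to \pi$, so $1-\cos(\pi h n)\to 2$, while $2+\cos(\pi h)\to 3$. Second, for any fixed constant $c>0$, the bound $|h[c/h]-c|\leq h$ implies $\cos(\pi h [c/h]) \to \cos(\pi c)$ as $h\to 0$. Applying this with $c=2/3$ in 1D, $c=1/2$ in 2D, and $c=\arccos(1/4)/\pi$ in 3D, the relevant cosine values tend to $-1/2$, $0$, and $1/4$ respectively.

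Substituting these values into the three formulas for $r_d$ reduces the limits to elementary arithmetic: in 1D the numerator tends to $2\cdot 3$ and the denominator to $(3/2)(3/2)$, giving $8/3$; in 2D the numerator tends to $2\cdot 9$ and the denominator to $4\cdot 1$, giving $9/2$; and in 3D the numerator tends to $2\cdot 27$ and the denominator to $(9/4)^3(3/4) = 2187/256$, giving $54\cdot 256/2187 = 2^9/3^4$.

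The only delicate point is the treatment of the integer-part operator $[\cdot]$ in the arguments of the cosines, which is precisely where the extremum of the trigonometric expression might be missed by a rounding of at most one grid index; the bound $|h[c/h]-c|\leq h$ together with continuity of $\cos$ handles this uniformly. Everything else is substitution, so I do not anticipate any real obstacle.
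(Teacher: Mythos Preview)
Your proposal is correct and is exactly the approach the paper itself (implicitly) takes: the paper states Theorem~\ref{thm:ratio-results} without a separate proof, treating the formulas for $r_d$ as immediate from dividing the expression in Lemma~\ref{lemma:cond-Laplace} by the $\kappa_p$ formulas of the preceding theorem, and the limits as routine evaluations. Your handling of the integer-part arguments via $|h[c/h]-c|\le h$ and continuity of $\cos$ is precisely the right justification, and your arithmetic for all three limits checks out.
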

%===============================================================================
 
%
From Theorem \ref{thm:ratio-results} it is interesting  to notice that the gains in terms of condition number ratios grow with the dimension; this suggests that our approach is particularly effective for 3D.

It is well known that the convergence bound of CG satisfies \cite{saad2003iterative}  
\begin{equation*}
\| x-x_k  \|_A  \leq 2  \left(\frac{ \sqrt{ \kappa }-1 } { \sqrt{ \kappa}+1} \right)^k \| x-x_0\|_A.
\end{equation*} 

Requiring  $\| x-x_k  \|_A \leq  \epsilon$ gives
\begin{equation*}
 k  \approx \frac{ 1} {2} {\rm log } (\epsilon/2) \sqrt{ \kappa }.
\end{equation*} 
It follows that to achieve the same convergence tolerance $\epsilon$, the ratio of iteration numbers of CG without preconditioner to that of CG with preconditioner is
\begin{equation}\label{eq:theoretical-itn-ratio}
\frac{k}{k_p} = \sqrt{\frac{\kappa}{\kappa_p}} =\sqrt{r}.
\end{equation}
%
%
%=================================================================================================
\section{Numerical experiments}
\label{sec:numerical}

To demonstrate the efficiency of the mass matrix as a preconditioner for the Laplacian,  we  consider the Poisson equation in two and three dimensions on the unit square and cube, respectively, subject to homogeneous Dirichlet boundary conditions. We discretize it using a uniform mesh, as briefly described in Section~\ref{sec:intro}.  We run CG with and without preconditioner and stop the iteration when the residual norm  is below $10^{-8}$.

In Figure~\ref{fig:eigenvalues-2d-cg} we illustrate the effect that preconditioning has on the eigenvalues of the matrix. It is evident that most of the eigenvalues of the preconditioned matrix have values relatively close to 1, which explains the effectiveness of the preconditioner.
\begin{figure}[H] 
\begin{center}
  \includegraphics[width=0.99\textwidth]{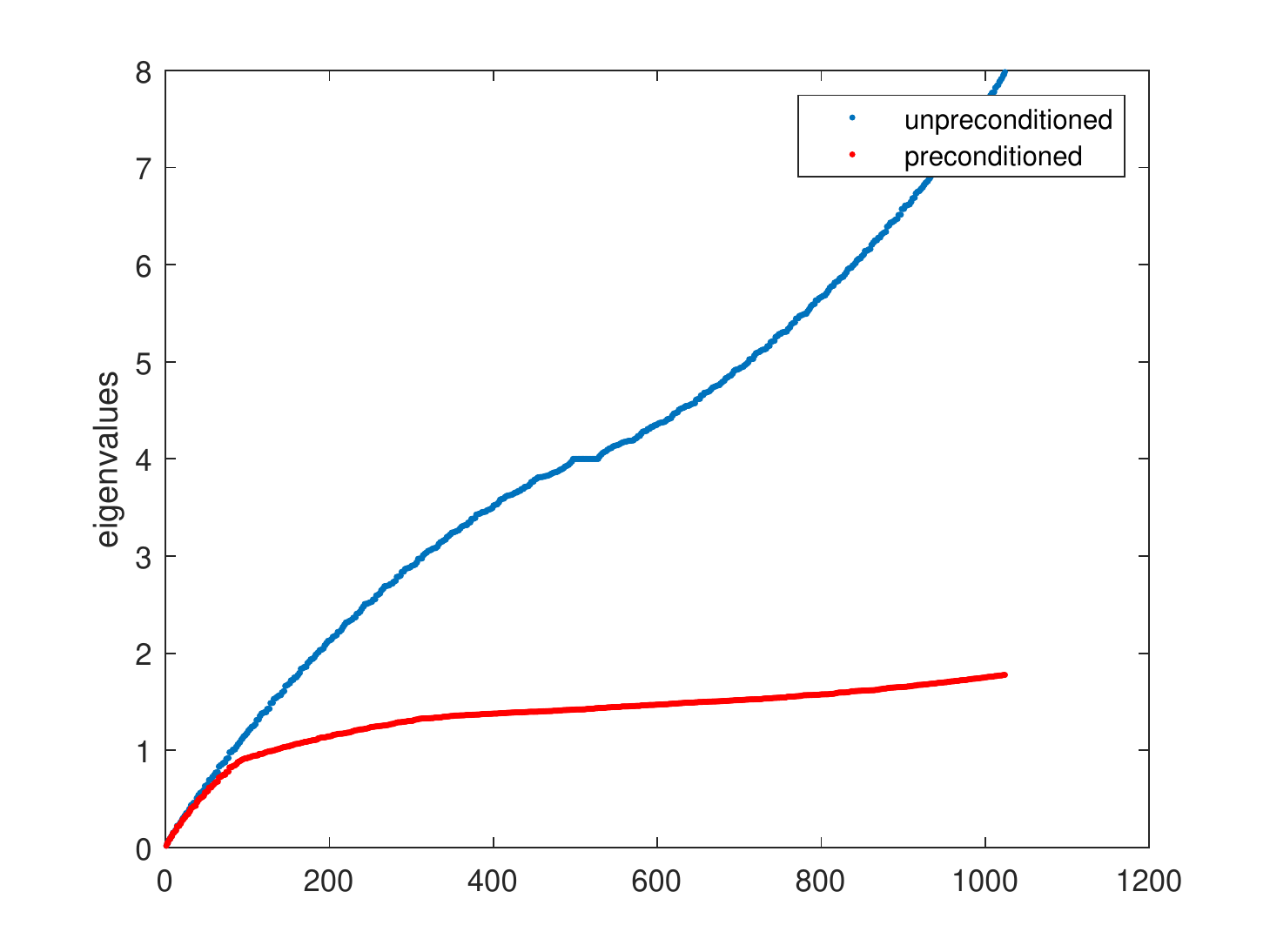}
   \caption{Eigenvalues of the Laplacian vs. the product of the mass matrix by the Laplacian. This is a 2D problem with $n=32$, i.e., the matrix is of dimensions $1,024 \times 1,024$.}\label{fig:eigenvalues-2d-cg}
   \end{center}
 \end{figure}

In Table \ref{tab:ratio}, we numerically compute the ratio $\frac{\kappa}{\kappa_p}$ for different values of meshgrid size $h$ and dimension $d$.  The results in Table \ref{tab:ratio} very closely match the analytical results in Theorem \ref{thm:ratio-results}. For fixed $d$,  when $h$ increases, the ratio $r_d$ increases, but $r_d$ is bounded.
%===================================================================================================
\begin{table}[H]
 \caption{The  ratio of two condition numbers}
\centering
\begin{tabular}{c l  l l }
\hline
$r_d$  &8     &16              &32  \\
\hline
 \hline
$r_1$                  &32.1634/12.6914 $\approx$ 2.5          &  116.4612/44.2414$\approx$ 2.6        & 440.6886/165.8836    $\approx$ 2.7              \\
\hline
$r_2$                  & 32.1634/7.6173 $\approx$ 4.2          & 116.4612/26.3451$\approx$ 4.4        & 440.6886/98.3943  $\approx$ 4.5                \\
\hline
$r_3$                  & 32.1634/5.5393  $\approx$ 5.8         & 116.4612/18.8900$\approx$ 6.2        & 440.6886/70.1771 $\approx$ 6.3                \\
\hline
\end{tabular}\label{tab:ratio}
\end{table}

We now move to show some convergence results in 2D and 3D. In Table~\ref{tab:iterations} we summarize our findings. 
\begin{table}[H]
\caption{Iteration counts for 2D and 3D experiments. The column under `mtx-size' gives the sizes of the linear systems considered (number of degrees of freedom). The term `th-itn-ratio' stands for `theoretical iteration counts  ratio' (see \eqref{eq:theoretical-itn-ratio}), as explained in the example. The term `itn-ratio' refers to the ratio between iteration counts for the unpreconditioned case (`itn-unprec') and iteration counts for the preconditioned case (`itn-prec').}
\centering
\begin{tabular}{ccccccc}
\hline
Type      & $n$  & mtx-size            &itn-unprec   & itn-prec  & th-itn-ratio & itn-ratio\\
\hline
 \hline
2D      & 32  & 1,024  & 62 & 30    & 2.12 & 2.07            \\  
       & 64   & 4,096  & 122 & 58    & & 2.10            \\   
           & 128   & 16,384  & 231 & 110    & & 2.10            \\  
                    & 256   & 65.536  & 454 & 215    & & 2.11            \\  \hline \hline  
3D       & 32     &  32,768 &   81 & 33 & 2.51 & 2.45     \\  
       &  64    &262,144    &  158 & 63  & & 2.51        \\  
         &  96    &884,736    &  225 & 90  & & 2.50        \\  
           &  128    & 2,097,152    &  296 & 118  & & 2.51        \\  \hline
\end{tabular}\label{tab:iterations}
\end{table}

These results in Table~\ref{tab:iterations}  are consistent with our theoretical findings. 
When $d=2$, $r_2\approx 4.5$ and $\sqrt{4.5}\approx 2.12$. Thus,  unpreconditioned CG is expected to take approximately 2.12 times the iteration number of preconditioned CG.
When $d=3$, $r_3\approx 6.3$ and $\sqrt{6.3}\approx 2.51$. Thus,  unpreconditioned CG is expected to take approximately 2.51 times the iteration number of preconditioned CG. The table shows that those predictions are remarkably accurate in both 2D and 3D.

In Figure \ref{fig:Errors-2d-cg}, we show convergence history for $n=128$ and $n=256$ in 2D. In Figure \ref{fig:Errors-3d-cg} we show convergence history $n=64$ and $n=128$ in 3D.

  \begin{figure}[H]
 \includegraphics[width=0.49\textwidth]{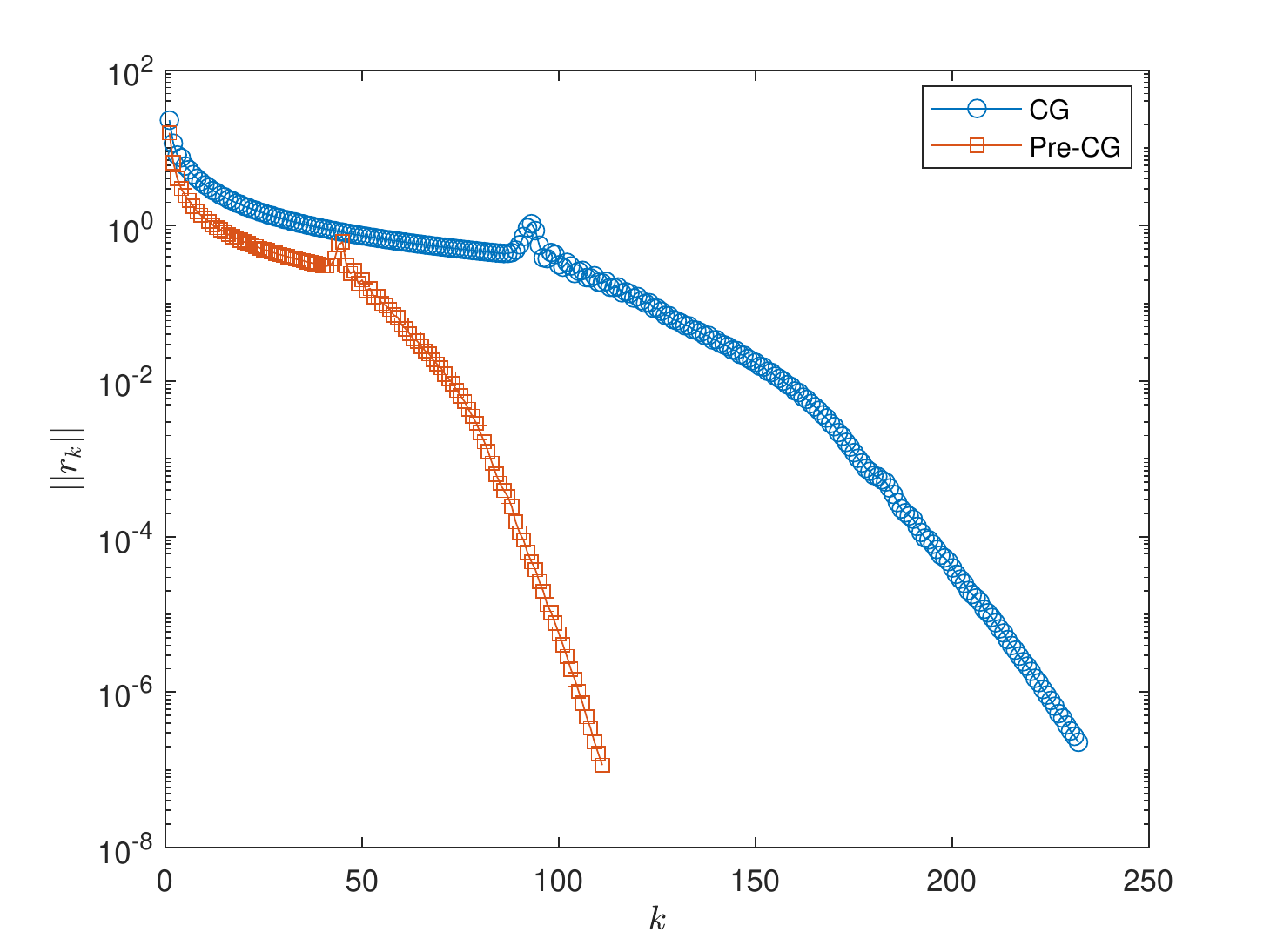}
\includegraphics[width=0.49\textwidth]{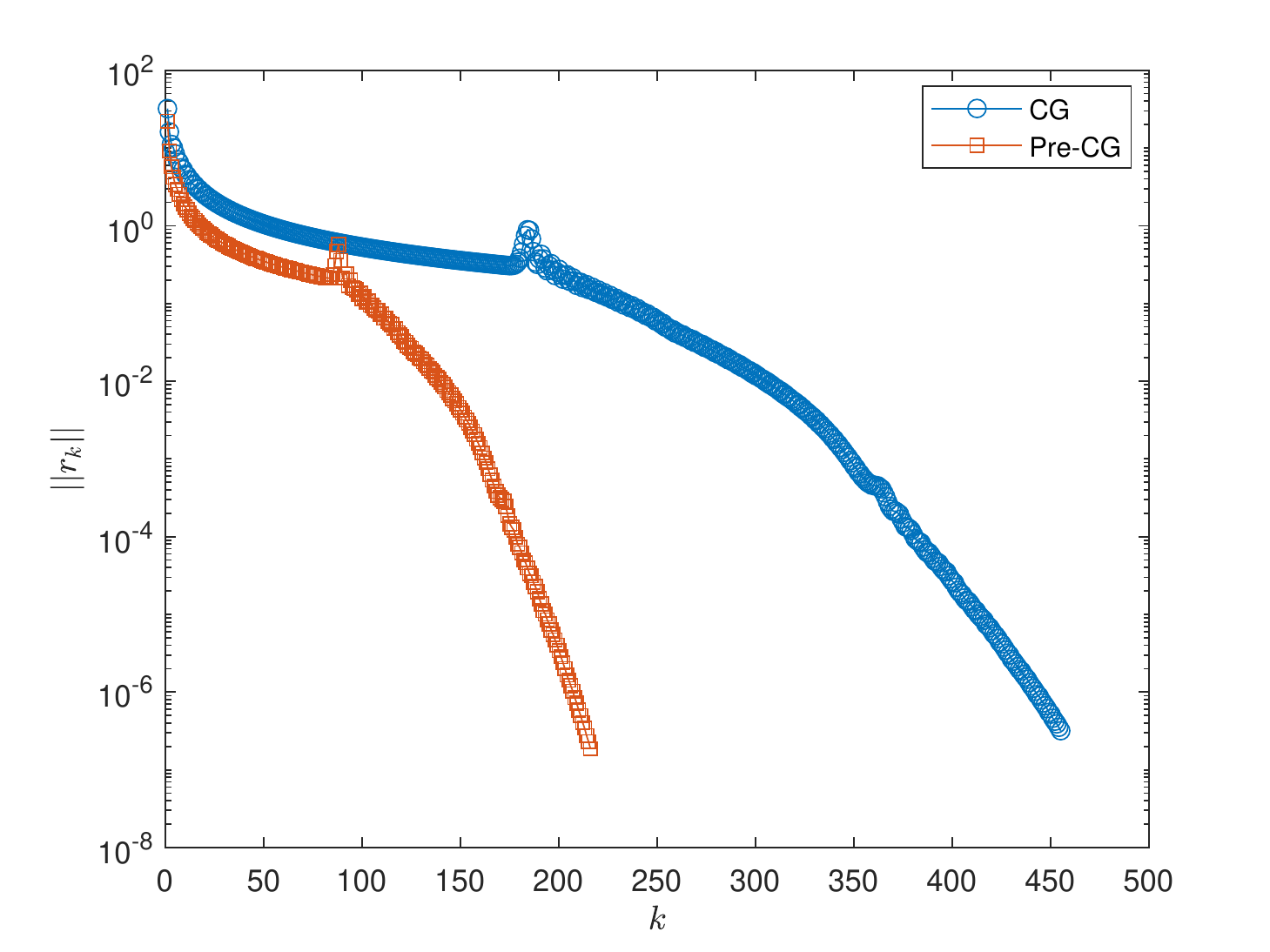}
   \caption{Convergence history of CG with and without a preconditioner in 2D.  Left:   $n=128$. Right: $n=256$.}\label{fig:Errors-2d-cg}
 \end{figure}  % 
 
 %========================================================
\begin{figure}[H] 
  \includegraphics[width=0.49\textwidth]{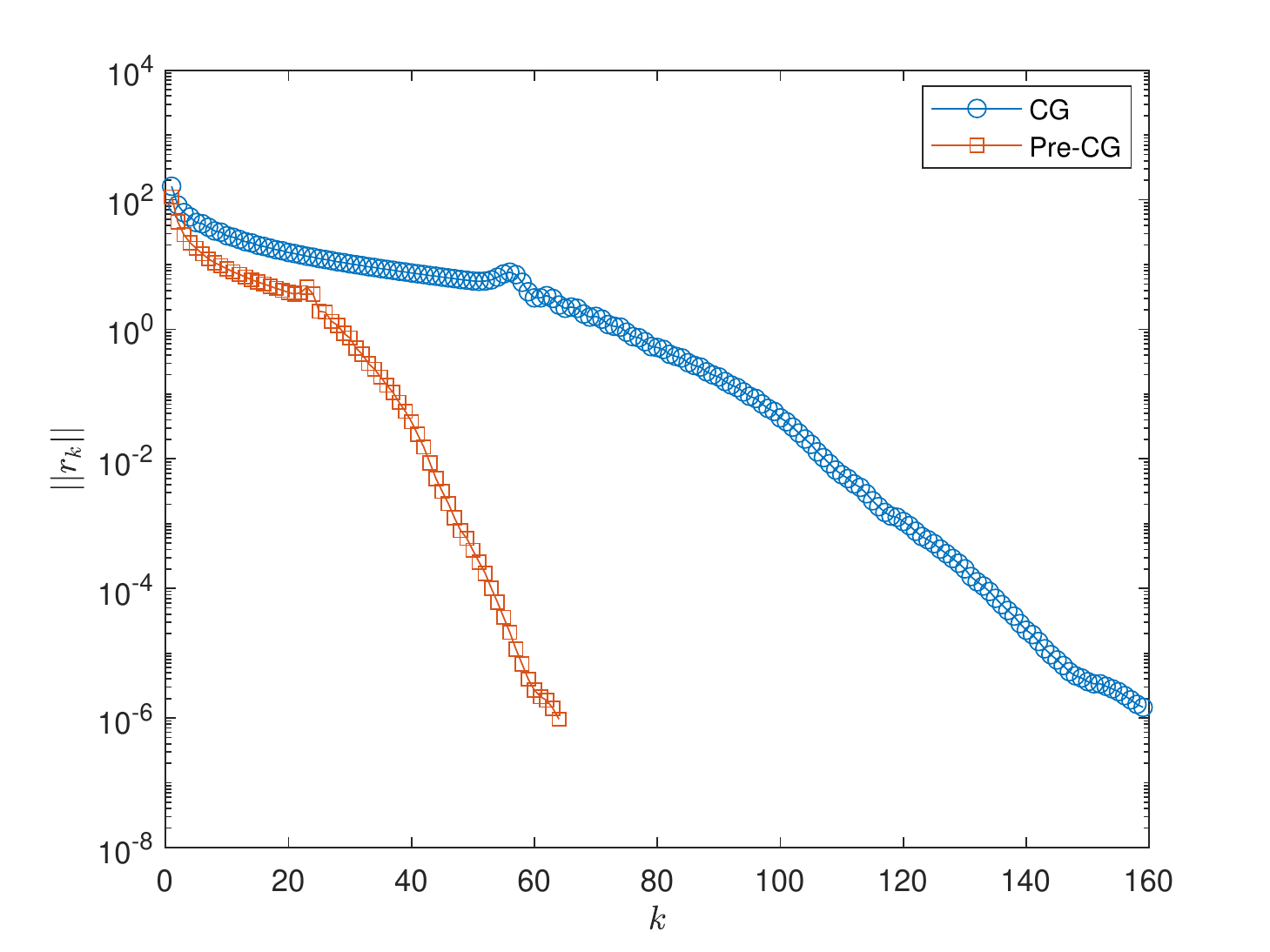}
\includegraphics[width=0.49\textwidth]{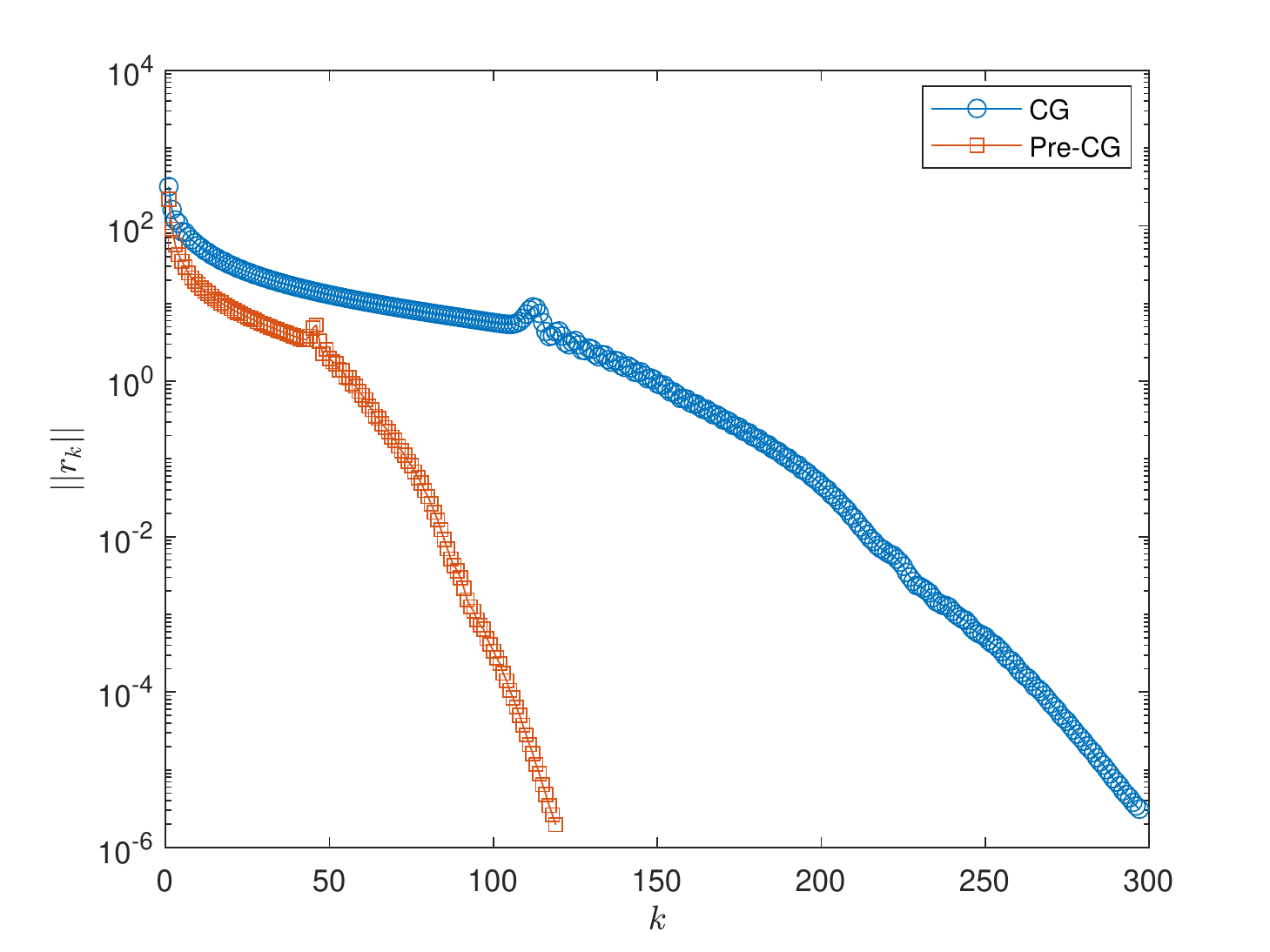}
   \caption{Convergence history of CG with and without preconditioner in 3D.  Left:   $n=64$. Right: $n=128$. }\label{fig:Errors-3d-cg}
 \end{figure}

\section{Concluding remarks}
\label{sec:conc}

Our analytical results  provide a remarkably accurate estimate of the condition number and iteration counts for CG. At a minimal cost that amounts to a matrix-vector product by the sparse and well-conditioned mass matrix, convergence speed is at least doubled. The gains are stronger in the 3D case. The cost of the additional matrix-vector product per iteration is modest, especially if considered in a parallel computing environment. Therefore, the overall computational gains are meaningful.

The proposed scheme is extremely simple and easy to implement and may make it possible to utilize the mass matrix in other problems in potentially useful ways.

%=====================================================================
\bibliographystyle{siam}
\bibliography{refKry}

\begin{thebibliography}{1}

\bibitem{CH2021addVanka}
{\sc C.~Greif and Y.~He}, {\em A closed-form multigrid smoothing factor for an
  additive {V}anka-type smoother applied to the {P}oisson equation}, arXiv
  preprint arXiv:,  (2021).

\bibitem{saad2003iterative}
{\sc Y.~Saad}, {\em Iterative methods for sparse linear systems}, vol.~82,
  SIAM, 2003.

\bibitem{MR1807961}
{\sc U.~Trottenberg, C.~W. Oosterlee, and A.~Sch{\"u}ller}, {\em Multigrid},
  Academic Press, Inc., San Diego, CA, 2001.

\end{thebibliography}
\end{document}